\newcommand{\Lh}{\widehat L}
\newcommand{\Ug}{\uparrow\!}
\newcommand{\SC}[1]{\msf{C}_{#1}}
\newcommand{\SM}[1]{\msf{M}_{#1}}
\theoremstyle{plain}
\newtheorem{theorem}{Theorem}
\newtheorem*{theoremn}{Theorem}
\newtheorem{lemma}[theorem]{Lemma}
\begin{document}
\title[On the structure theorem of planar semimodular lattices]{Notes on planar semimodular lattices. VI. \\On the structure theorem of planar semimodular lattices}  
\author{G. Gr\"{a}tzer} 
\email[G. Gr\"atzer]{gratzer@me.com}
\address{Department of Mathematics\\
  University of Manitoba\\
  Winnipeg, MB R3T 2N2\\
  Canada}
  \date{May 6, 2012}
\urladdr[G. Gr\"atzer]{http://server.maths.umanitoba.ca/homepages/gratzer/}

\subjclass[2010]{Primary: 06C10. Secondary: 06B15}
\keywords{semimodular lattice; planar lattice; slim; rectangular.}

\begin{abstract}
In a recent paper, G. Cz\'edli and E.\,T. Schmidt present
a structure theorem for planar semimodular lattices. 
In this note, we present an alternative proof.
\end{abstract}

\maketitle

\section{Introduction}\label{S:I}
G. Cz\'edli and E.\,T. Schmidt \cite{CSb} present two structure theorems
for planar semimodular lattices (Theorem 3.6 and Corollary 3.5)
and a structure theorem for patch lattices, the building stones (Theorem 3.4). 
The first structure theorem for planar semimodular lattices 
is based on a new construction, patchwork systems, 
and the second on the classic gluing over a chain. 
Although our line of reasoning would simplify 
the proof of all three theorems, 
we discuss only the second structure theorem
for planar semimodular lattices because patchwork systems 
are technically complicated to introduce.

We start with a few concepts
from G.~Gr\"atzer and E. Knapp \cite{GK07} and \cite{GK09}.

Let $L$ be a planar lattice. 
A \emph{left weak corner} (resp., \emph{right weak corner}) 
of the lattice $L$ is a doubly-irreducible element 
in $L - \set{0,1}$ on the left (resp., right) boundary of~$L$. 
We define a \emph{rectangular lattice} $L$ as 
a planar semimodular lattice which has exactly one left weak corner, $u_l$,
and exactly one right weak corner, $u_r$,
and they are complementary, that is
\begin{align*}
   u_l \jj u_r &= 1,\\
   u_l \mm u_r &= 0.
\end{align*}

G. Cz\'edli and E.\,T. Schmidt \cite{CSb} define a \emph{patch lattice}
as a rectangular lattice in which the weak corners are dual atoms; in this note we allow $\SC 2$ as a patch lattice.

Let the lattice $L$ be a gluing of the lattices $A$ and $B$ over $I$
($I$ is a filter of~$A$ and an ideal of $B$). If $I$ is a chain, then
we say that $L$ is a gluing of the lattices $A$ and $B$ over a chain.

So here is the Cz\'edli-Schmidt \cite{CSb} result we reprove in this note:

\begin{theoremn}[A Structure Theorem of Planar Semimodular Lattices]
Let $L$ be a planar semimodular lattice with more than one element. 
Then there is a sequence
\[
L_1, L_2, \dots, L_n = L
\]
of lattices such that 
each~$L_i$, for $1 \leq i \leq n$, is a patch lattice
or $L_i$ is the gluing over a chain of  $L_j$ and $L_k$
for some $j,k < i$. 
\end{theoremn}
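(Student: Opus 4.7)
I will argue by induction on $|L|$. For the base case $|L| = 2$ we have $L = \SC 2$, which the paper's convention admits as a patch lattice, so the one-term sequence $L_1 = L$ does the job. Assume $|L| > 2$ and that the theorem holds for all planar semimodular lattices of strictly smaller cardinality.

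If $L$ is itself a patch lattice, take $n = 1$ and $L_1 = L$. Otherwise, my aim is to exhibit a chain $I \subseteq L$ and two proper planar semimodular sublattices $A, B$ with $A \cap B = I$, such that $I$ is a filter of $A$, an ideal of $B$, and $A \cup B = L$. Granted such a decomposition, the inductive hypothesis yields sequences for $A$ and for $B$; concatenating these and appending $L$ itself, recovered as the gluing of $A$ and $B$ over the chain $I$, gives the sequence required for $L$.

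The substantive part of the proof is therefore the structural claim that \emph{every planar semimodular lattice which is not a patch lattice admits such a splitting chain}. I would establish it by cases, according to how patch-hood fails. If $L$ is not rectangular, then one of the three rectangularity conditions---uniqueness of the left weak corner, uniqueness of the right weak corner, or complementarity of the two weak corners---fails; in each sub-case, inspection of the left and right boundary chains $C_l, C_r$ of the planar diagram, together with the positions of the weak corners, isolates a chain $I$ running from $C_l$ to $C_r$ that bounds two complementary planar regions of $L$. If $L$ is rectangular but not a patch lattice, then at least one of the weak corners, say $u_l$, is not a dual atom; since $u_l$ is meet-irreducible it has a unique cover $u_l^*$ with $u_l^* < 1$, and the splitting chain $I$ can be chosen to pass through $u_l^*$ on its way from $C_l$ to $C_r$.

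\textbf{Main obstacle.} The chief technical difficulty is verifying that the sublattices $A$ and $B$ cut out by such a chain are themselves semimodular, since semimodularity is not inherited by arbitrary sublattices: one must check that every covering square of $L$ lying entirely in $A$ (respectively $B$) remains a covering square in $A$ (respectively $B$). This forces the splitting chain to be chosen as a ``transversal'' of the planar diagram that severs no covering square, and it is precisely at this step that the Gr\"atzer--Knapp analysis of weak corners and boundary structure \cite{GK07, GK09} is indispensable. Planarity of $A$ and $B$ is automatic from their description as regions of the planar diagram bounded by arcs of $C_l$, $C_r$, and $I$; properness of $A$ and $B$---so that the induction actually descends---uses exactly the hypothesis that $L$ is not already a patch lattice.
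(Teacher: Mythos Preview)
Your induction framework and your reformulation as ``if $L$ is not a patch lattice then it splits over a chain'' match the paper's equivalent form exactly, and your treatment of the rectangular-but-not-patch case (choosing $x$ with $u_l < x < 1$ and splitting along a chain through $x$) is essentially the paper's Decomposition Theorem (Theorem~\ref{T:decomp}, imported from \cite{GK10}): there the splitting chain is $[x \mm u_r, x]$ and the two pieces $L_{\tup{top}}$, $L_{\tup{bottom}}$ are already known to be slim rectangular.

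Where you diverge is the non-rectangular case. You propose a direct case analysis on how rectangularity fails, locating a splitting chain by ``inspection'' of the boundary, and you correctly flag the semimodularity of the resulting pieces as the main obstacle---but you do not actually carry any of this out. The paper sidesteps the whole difficulty. It first reduces to slim $L$ (removing and later restoring the ``eyes'' of cover-preserving $\SM 3$'s, a step your plan omits), and then, rather than analysing a non-rectangular $L$ directly, it \emph{extends} $L$ to a slim rectangular lattice $\Lh$ by iterated one-step extensions (Lemmas~\ref{L:onestepext} and~\ref{T:rectangular}, relying on \cite[Theorem~7]{GK09}). The Decomposition Theorem is then applied only to $\Lh$, and the resulting gluing over a chain is pulled back to $L$ via the elementary Lemma~\ref{L:L:onestepreduction}: deleting the added doubly-irreducible element $t$ from the ideal, filter, and chain preserves all the required properties. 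Thus the only splitting ever performed is the well-understood rectangular one, and no case analysis on the failure of rectangularity---nor any ad~hoc verification that the pieces remain semimodular---is ever needed.

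In short, your plan is sound in outline and correct in the rectangular branch, but the non-rectangular branch is a genuine gap as written; the paper's extend-then-restrict device is precisely the idea that closes it.
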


An equivalent form of the Theorem is the following statement:

\medskip

\emph{Let $L$ be a planar semimodular lattice with more than one element. 
If $L$ is not a patch lattice, 
then $L$ has an ideal $A$ and a filter $B$ 
\lp also planar semimodular lattices\rp
such that $C = A \ii B\neq \es$
is a chain and $L = A \uu B$ is the gluing of the lattices 
$A$ and $B$ over the chain~$C$.}

\medskip

We will refer to this statement also as the Theorem.

We make one more modification: 
we assume that $L$ be slim. (This concept was introduced in~\cite{GK07}; 
a~planar semimodular lattice is \emph{slim} if it contains no $\SM 3$, equivalently, no cover-preserving $\SM 3$.)
We can do this because if $L$ is not slim, then we can ``slim'' it 
by dropping the ``eyes'', 
the middle elements in cover-preserving \text{$\SM 3$-s.}
We~find the $A_s$ and $B_s$ for the slimmed $L$ 
and then put the eyes back into $A_s$ and $B_s$ to obtain
the $A$ and $B$ for $L$.

We call slim, planar, semimodular lattices SPS lattices.

\section{Reduction to slim rectangular lattices}
We prove now that in the Theorem we may assume that
$L$ is slim and rectangular.

We start with an easy result from the Appendix 
of G. Gr\"atzer and E. Knapp \cite[Lemma 7]{GK09}, 
which we state in a simplified form:

\begin{lemma}\label{L:onestepext}
Let $L$ be an SPS lattice with more than one element. 
Let $E = \set{a \prec b \prec c}$ be on the left boundary of $L$. 
Let $a$ be meet-irreducible and let $c$ be join-irreducible in $L$.
Order $L[E] = L \uu \set{t}$, $t \nin L$, 
so that $L$ be a suborder and $a \prec t \prec c$; 
place $t$ to the left of $E$, above $a$ and below~$c$
\lp for an illustration, see Figure~\ref{Fi:extension}\rp.

Then $L[E]$, called a \emph{one-step extension of $L$,} 
is an SPS lattice.
\end{lemma}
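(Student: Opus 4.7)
The plan is to verify, in turn, the four properties in the definition of SPS lattice for $L[E]$. First, for the lattice structure, I would describe the comparabilities with $t$ precisely. Since the only new covers are $a \prec t$ and $t \prec c$, and since $a$ is meet-irreducible in $L$ (so its unique upper cover is $b$) while $c$ is join-irreducible in $L$ (so its unique lower cover is $b$), the elements of $L[E]$ comparable with $t$ are exactly the $L$-elements $\leq a$, the element $t$ itself, and the $L$-elements $\geq c$. For any $x \in L$ incomparable with $t$, I would derive the formulas
\begin{align*}
t \jj x &= c \jj x, \\
t \mm x &= a \mm x,
\end{align*}
the right-hand sides computed in $L$. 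These show that joins and meets always exist in $L[E]$, so $L[E]$ is a lattice. Planarity is immediate from the prescribed placement: $t$ lies in the exterior region of the planar diagram of $L$, just to the left of the segment $a \prec b \prec c$ of the left boundary, and the two new edges can be drawn without crossings.

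For semimodularity I invoke Birkhoff's criterion: in a finite lattice, upper semimodularity is equivalent to the condition that whenever $x,y$ both cover $x \mm y$, the join $x \jj y$ covers both $x$ and $y$. For pairs $(x, y) \subseteq L$ this condition is inherited from $L$, since meets, joins, and covers among $L$-elements are unchanged by the addition of $t$. The only remaining case has $x = t$ and $y \in L$ (and its dual); then $x \mm y \prec x$ forces $x \mm y = a$, which in turn forces $y$ to cover $a$ in $L$, whence $y = b$ by the meet-irreducibility of~$a$. Since $t \jj b = c$ with $t \prec c$ and $b \prec c$, the condition holds.

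For slimness, a cover-preserving $\SM{3}$ contained entirely in $L$ is ruled out by slimness of $L$, so any cover-preserving $\SM{3}$ in $L[E]$ would have to contain $t$. But $t$ has a unique upper cover~($c$) and a unique lower cover~($a$) in $L[E]$, so $t$ can be neither the bottom, nor the top, nor a middle atom of such an $\SM{3}$; the only candidate interval is $[a,c]$, which consists of the four elements $a, b, t, c$ and forms the four-element Boolean lattice, not an $\SM{3}$. The main obstacle is thus the semimodularity verification, where the hypotheses on meet-irreducibility of $a$ and join-irreducibility of $c$ are used precisely to collapse the Birkhoff-condition check to the single case $y = b$; once that case is handled, all the pieces fall out of the structure of the diamond $\set{a, b, t, c}$.
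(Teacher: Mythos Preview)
The paper does not supply its own proof of this lemma; it simply quotes it (in simplified form) from the Appendix of Gr\"atzer and Knapp~\cite[Lemma~7]{GK09}. So there is no ``paper's proof'' to compare against, and your self-contained verification is a genuine addition.

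Your argument is correct. The key facts you use---that joins, meets, and covering relations among elements of $L$ are unchanged in $L[E]$, and that the interval $[a,c]$ in $L[E]$ is exactly $\set{a,b,t,c}$---all follow from the irreducibility hypotheses on $a$ and~$c$, as you indicate. The semimodularity check via Birkhoff's covering condition is clean: the only nontrivial case is $x=t$, $y\in L$, and meet-irreducibility of $a$ forces $y=b$.

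One phrasing issue in the slimness paragraph: you write that $t$ ``can be neither the bottom, nor the top, \emph{nor a middle atom}'' of a cover-preserving $\SM 3$, but a doubly irreducible element can perfectly well be a middle atom of an $\SM 3$ (each atom of $\SM 3$ has a unique upper and a unique lower cover). What you actually argue---and what is correct---is that if $t$ were a middle atom, the $\SM 3$ would have to sit inside $[a,c]=\set{a,b,t,c}$, which is $\SC 2^2$ and not $\SM 3$. I would rephrase that sentence to separate the two conclusions: unique covers rule out $t$ as top or bottom, and then force the interval to be $[a,c]$ if $t$ is an atom.
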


\begin{figure}[h!]
\centerline{\includegraphics{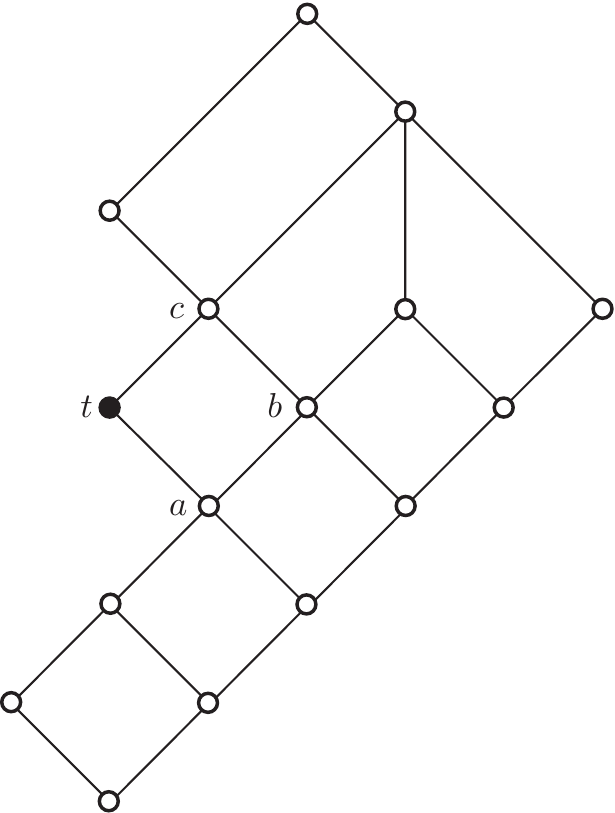}}
\caption{One-step extension}\label{Fi:extension}
\end{figure}

Now we can state the reduction step.

\begin{lemma}\label{L:L:onestepreduction}
Let $L$ be an SPS lattice  with more than one element
and let $L[E]$ be a one-step extension of~$L$, 
as in Lemma~\ref{L:onestepext}. 
If $L[E]$ can be obtained as a gluing of two lattices over a~chain,
then the same holds for $L$.
\end{lemma}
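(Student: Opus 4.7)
The plan is to take the hypothesized gluing of $L[E]$ and restrict it to $L$. Write $L[E] = A' \uu B'$ with $A'$ an ideal of $L[E]$, $B'$ a filter, and $C' = A' \ii B' \neq \es$ a chain, and set
\[
A = A' \ii L, \qquad B = B' \ii L, \qquad C = A \ii B = C' \ii L.
\]
I then want to verify that $A$ is an ideal of $L$, $B$ a filter, $L = A \uu B$, and that $C$ is a nonempty chain which is a filter of $A$ and an ideal of $B$.

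The first thing to establish is that $L$ is a genuine sublattice of $L[E]$, not merely a suborder. Suppose $x, y \in L$ had $x \jj_{L[E]} y = t$. Then $x, y \leq t$ in $L[E]$ forces $x, y \leq a$, because $a$ is the only lower cover of $t$ in $L[E]$ and $t \notin L$; but then $x \jj_L y \leq a < t$ would already bound $x, y$ in $L[E]$, contradicting the minimality of $x \jj_{L[E]} y$. The meet case is dual. Given this, $A$ is a down-set of $L$ closed under join (closure inherited from $A'$), hence an ideal of $L$; dually $B$ is a filter. The identity $L = A \uu B$ follows by intersecting $L[E] = A' \uu B'$ with $L$, and $C \subseteq C'$ is automatically a chain. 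The same sublattice property makes it routine that $C$ is a filter of $A$ and an ideal of $B$.

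The one delicate point, which I expect to be the main obstacle, is the nonemptiness of $C$. Since $C' \neq \es$, this can fail only when $C' = \set{t}$. In that case $t$ would be simultaneously the top of $A'$ and the bottom of $B'$, forcing $A' \subseteq \downarrow_{L[E]} t = \downarrow_L a \uu \set{t}$ and $B' \subseteq \uparrow_{L[E]} t = \uparrow_L c \uu \set{t}$, so
\[
L[E] \;=\; A' \uu B' \;\subseteq\; \downarrow_L a \;\uu\; \set{t} \;\uu\; \uparrow_L c.
\]
But $b \in L \subseteq L[E]$ satisfies $b > a$ (so $b \notin \downarrow_L a$), $b \neq t$, and $b < c$ (so $b \notin \uparrow_L c$); this contradiction rules out $C' = \set{t}$. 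Hence $C \neq \es$, and $L = A \uu B$ is the desired gluing over the chain $C$. Since $A$ and $B$ are an ideal and a filter of an SPS lattice, they are themselves planar semimodular, so all the conditions of the Theorem are met.
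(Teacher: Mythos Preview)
Your proof is correct and follows essentially the same route as the paper's own argument: restrict the gluing data to $L$ by intersecting with $L$, and rule out the bad case $C' = \set{t}$ by observing that then $A' = \Dg t$ and $B' = \Ug t$, whence $b$ lies outside $A' \uu B' = L[E]$. Apart from swapping the roles of the primed and unprimed letters, the only difference is that you spell out why $L$ is a sublattice of $L[E]$ (the paper just invokes double irreducibility of $t$) and why $t$ must be the top of $A'$ and the bottom of $B'$; both elaborations are welcome but do not change the strategy.
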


\begin{proof}
Let $L[E]$ have an ideal~$A$ and a filter $B$ 
such that $L[E] = A \uu B$ and
$C = A \ii B\neq \es$ is a chain. 
Set $A' = A \ii L = A - \set{t}$, 
$B' = B \ii L = B - \set{t}$, $C' = C \ii L = C - \set{t}$. 
Since $0 \in A'$ and $t$ is doubly irreducible, 
it follows that $A' \neq \es$ is an ideal of $L$; 
similarly, $B' \neq \es$ is a filter of $L$. 

If $C' = \es$, then $C = \set{t}$, 
implying that $A = {\Dg t}$ and $B = {\Ug t}$.
Since 
\[
   b \nin {\Dg t} \uu {\Ug t} = A \uu B = L[E],
\]
this is a contradiction, proving that $C' \neq \es$.

We conclude that $L = A' \uu B'$ is the gluing of the lattices 
$A'$ and $B'$ over the chain $C' = A' \ii B'$.
\end{proof}

By G. Gr\"atzer and E.~Knapp~\cite[Theorem 7]{GK09}, applying the one-step extension as many times as necessary
we obtain a rectangular lattice. So we get the following result:

\begin{lemma}[Reduction step]\label{T:rectangular}
Let $L$ be an SPS lattice with more than one element. 
Then there exists a slim rectangular extension 
$\Lh$ satisfying the following condition: 
if $\Lh$ can be obtained as a gluing of two lattices over a chain,
then the same holds for $L$. 
\end{lemma}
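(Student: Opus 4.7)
The plan is to produce $\widehat L$ by iterating the one-step extension of Lemma~\ref{L:onestepext}, and then to pull back the gluing decomposition one step at a time using Lemma~\ref{L:L:onestepreduction}.

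First I would invoke the cited Gr\"atzer--Knapp result (\cite[Theorem~7]{GK09}) to get a finite sequence
\[
L = L_0, \; L_1, \; \dots, \; L_n = \widehat L,
\]
where each $L_{i+1}$ is a one-step extension $L_i[E_i]$ of $L_i$ for an appropriate covering chain $E_i = \set{a_i \prec b_i \prec c_i}$ on the left (or, by symmetry, right) boundary of~$L_i$, and where $\widehat L$ is slim rectangular. At each stage Lemma~\ref{L:onestepext} guarantees that the extension $L_{i+1}$ is again an SPS lattice, so the hypothesis of the next one-step extension is met, and the induction is clean. This gives the existence of a slim rectangular SPS extension~$\widehat L$ of~$L$.

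Next I would verify the reduction property by downward induction on~$i$. Suppose $\widehat L = L_n$ can be written as a gluing of two lattices over a chain. Assume, as the inductive hypothesis, that $L_{i+1}$ admits such a gluing decomposition. Since $L_{i+1}$ is a one-step extension of the SPS lattice $L_i$, Lemma~\ref{L:L:onestepreduction} applies and yields a gluing of $L_i$ over a chain. Iterating this from $i = n-1$ down to $i = 0$ transports the decomposition all the way back to $L_0 = L$, which is exactly the conclusion required.

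The substantive work has already been done: Lemma~\ref{L:onestepext} carries the SPS property upward, \cite[Theorem~7]{GK09} guarantees termination at a slim rectangular lattice, and Lemma~\ref{L:L:onestepreduction} transports gluings downward one step at a time. The only point that needs a sentence of care is the inductive composition, i.e.\ observing that Lemma~\ref{L:L:onestepreduction} may indeed be applied at every stage, which is immediate because each intermediate $L_i$ is SPS and each $L_{i+1}$ is by construction a one-step extension of it. I do not foresee a serious obstacle, since the whole proof is essentially a two-sided induction built on the three already-established ingredients.
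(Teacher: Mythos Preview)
Your proposal is correct and follows precisely the paper's approach: the paper simply remarks that by \cite[Theorem~7]{GK09} one iterates the one-step extension until a slim rectangular lattice is reached, implicitly invoking Lemma~\ref{L:onestepext} at each step to stay within SPS lattices and Lemma~\ref{L:L:onestepreduction} to pull the gluing decomposition back down. You have merely made the two inductions explicit, which the paper leaves to the reader.
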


\section{Proving the Theorem for slim rectangular lattices}
For a rectangular lattice $L$, 
let $x$ be on the upper left boundary of $L$ with $x \nin \set{u_l, 1}$.
Define (see Figure~\ref{Fi:L}):
\begin{align*}
   L_{\tup{top}}    &= [x \mm u_r, 1],\\
   L_{\tup{bottom}} &= [0, x].
\end{align*}

\begin{figure}[t]
\centerline{\includegraphics{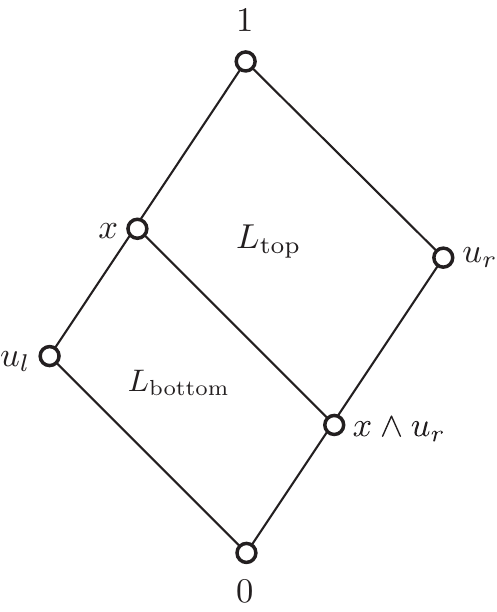}}
\caption{Decomposing a rectangular lattice $L$}\label{Fi:L}
\end{figure}

We now present a result of G. Gr\"atzer and E.~Knapp~\cite[Lemma 14 and Theorem 18]{GK10} in a simplified form:

\begin{theorem}[Decomposition Theorem]\label{T:decomp}
Let $L$ be a slim rectangular lattice
and let $x \nin \set{u_l, 1}$ be on the upper left boundary of $L$. 
From the lattice $L$, we construct the slim rectangular lattices 
$L_{\tup{top}}$ and $L_{\tup{bottom}}$. 
Then $L$ can be reconstructed 
from these two lattices by gluing them over a chain $[x \mm u_r, x]$.
In addition, $x = u_l \jj (x \mm u_r)$.
\end{theorem}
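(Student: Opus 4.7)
The plan is to establish three claims in sequence: the identity $x = u_l \jj (x \mm u_r)$; the slim rectangularity of $L_{\tup{top}}$ and $L_{\tup{bottom}}$; and that $L$ is the gluing of these two intervals over the chain $C = [x \mm u_r, x]$. For the identity, since $x$ lies on the upper left boundary of the rectangular lattice $L$, which is the chain $[u_l, 1]$, we have $u_l \leq x$ and hence $u_l \jj (x \mm u_r) \leq x$. For the reverse inequality I would invoke the standard fact for slim rectangular lattices that $z \mapsto z \mm u_r$ is an order isomorphism from the upper-left boundary chain $[u_l, 1]$ onto the lower-right boundary chain $[0, u_r]$, with inverse $w \mapsto w \jj u_l$; this is a consequence of semimodularity, slimness, and the complementary-corner identities $u_l \jj u_r = 1$ and $u_l \mm u_r = 0$. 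Applying the inverse at $w = x \mm u_r$ gives the desired identity.

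Next I would verify that $L_{\tup{bottom}} = [0, x]$ and $L_{\tup{top}} = [x \mm u_r, 1]$ are slim rectangular. Both inherit slimness, planarity, and semimodularity as intervals of $L$. In $L_{\tup{bottom}}$, the element $u_l$ remains on the left boundary and is the unique left weak corner (a second one would also be a left weak corner of $L$, contradicting its rectangularity), and $x \mm u_r$ is the unique right weak corner; they are complements in $[0, x]$ by the identity just proved. A symmetric argument applies to $L_{\tup{top}}$: $x$ is its unique left weak corner (being on the left boundary of $L$ and hence of $L_{\tup{top}}$, and doubly-irreducible within it), $u_r$ is the unique right weak corner, and they are complementary since $x \mm u_r$ is the zero of $L_{\tup{top}}$ while $x \jj u_r \geq u_l \jj u_r = 1$ using $u_l \leq x$.

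For the gluing, I would first show that $C = [x \mm u_r, x]$ is a chain: the complementarity of $u_l$ and $x \mm u_r$ in $[0, x]$ together with semimodularity makes $z \mapsto z \jj (x \mm u_r)$ a cover-preserving map from the left-boundary chain $[0, u_l]$ into $[x \mm u_r, x]$, and slimness forces it to be an onto order isomorphism, so $C$ is a chain. For $L = L_{\tup{bottom}} \uu L_{\tup{top}}$ I would invoke the standard planar-lattice principle that a maximal chain whose endpoints lie on opposite boundaries separates the diagram of $L$ into an ideal region (here $[0, x]$) and a filter region (here $[x \mm u_r, 1]$). Finally, $C$ is by construction a filter of $L_{\tup{bottom}}$ and an ideal of $L_{\tup{top}}$, and the orders on $C$ agree, so $L$ is the gluing of $L_{\tup{bottom}}$ and $L_{\tup{top}}$ over $C$. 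The main obstacle I anticipate is precisely this planar separation step $L = L_{\tup{bottom}} \uu L_{\tup{top}}$: though visually evident from Figure~\ref{Fi:L}, a fully rigorous version relies on the slightly delicate planar-lattice lemma that a maximal chain with endpoints on opposite boundaries of a planar lattice partitions it into an ideal half and a filter half.
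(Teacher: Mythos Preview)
The paper does not actually prove this theorem; it quotes it as a simplified form of G.~Gr\"atzer and E.~Knapp~\cite[Lemma~14 and Theorem~18]{GK10} and uses it as a black box in the subsequent induction. Your proposal, by contrast, sketches a direct argument, so the two are not really comparable line by line. That said, your outline tracks the argument of~\cite{GK10} quite closely: the boundary isomorphism $z \mapsto z \mm u_r$ from $[u_l,1]$ onto $[0,u_r]$ with inverse $w \mapsto w \jj u_l$, the identification of the weak corners of the two intervals, and the chain property of $[x \mm u_r, x]$ are exactly the ingredients assembled there.

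Two places in your sketch would need tightening before it stands on its own. First, the claim that a second left weak corner of $[0,x]$ ``would also be a left weak corner of $L$'' is not immediate: meet-irreducibility in the interval $[0,x]$ does not automatically lift to $L$, since an element could acquire an additional upper cover outside $[0,x]$. The conclusion is correct, but one has to argue directly that the candidate boundary elements remain meet- or join-reducible in the subinterval. Second, as you yourself flag, the separation step $L = [0,x] \uu [x \mm u_r, 1]$ is the substantive part; this is precisely what \cite[Lemma~14]{GK10} establishes, and a self-contained proof needs the planar-lattice separation lemma you allude to together with the specific geometry of the boundary chains in a slim rectangular lattice. In short, your proposal is a reasonable reconstruction of the cited proof rather than an alternative to it.
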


Now we are ready to prove the Theorem 
for a slim rectangular lattice $L$. 
We~proceed by induction on the size of $L$. 
If $L$ is a patch lattice, there is nothing to prove.
If $L$ is not a patch lattice, then by symmetry, 
we can assume that $u_l$ is not a dual atom. 
Take an element $x \in L$ with $u_l < x < 1$.

By Theorem~\ref{T:decomp}, we obtain $L$ by gluing 
the smaller rectangular lattices
$L_{\tup{top}}$ and $L_{\tup{bottom}}$ over a chain.
By the induction hypothesis, there is a sequence of lattices
$T_1, T_2, \dots, T_n = L_{\tup{top}}$ for $L_{\tup{top}}$
as required by the Theorem,
and another  $B_1, B_2, \dots, B_n = L_{\tup{bottom}}$ 
for $L_{\tup{bottom}}$.
Merging the two sequences and appending $L$ to the end,
we get the sequence for $L$, concluding the proof of the Theorem
for rectangular lattices.

\end{document}